 \newtheorem{thm}{Theorem}[section]
 \newtheorem{rem}{Remark}[section]
 \newtheorem{dfn}{Definition}[section]
 \numberwithin{equation}{section}
 \newcommand{\Real}{\mathbb{R}}
 \newcommand{\Torus}{\mathbb{T}}
 \newcommand{\Sphere}{\mathbb{S}}
 \newcommand{\p}{\partial}
 \newcommand{\ga}{\gamma}
 \newcommand{\om}{\omega}
 \newcommand{\ta}{\theta}
 \newcommand{\nn}{\nonumber}
 \newcommand{\nb}{\nabla}
 \def\lg{\langle} \def\rg{\rangle}
 \newcommand{\Sch}{Schr\"odinger~}
\begin{document}

\title{\bf Geometric solitons of Hamiltonian flows on manifolds}

\author{Chong Song\thanks{The first author is supported
by NSFC, Grant No. 11201387},\quad Xiaowei Sun\thanks{The second author
is supported by NSFC, Grant No. 11226082},\quad Youde Wang\thanks{The
third author is supported by NSFC, Grant No. 10990013}}

\maketitle

\begin{abstract}
It is well-known that the LIE(Locally Induction Equation) admit soliton-type solutions and same soliton solutions arise from different and apparently irrelevant physical models. By comparing the solitons of LIE and Killing magnetic geodesics, we observe that these solitons are essentially decided by two families of isometries of the domain and the target space respectively. With this insight, we propose the new concept of geometric solitons of Hamiltonian flows on manifolds, such as geometric \Sch flows and KdV flows for maps. Moreover, we give several examples of geometric solitons of the \Sch flow and geometric KdV flow, including magnetic curves as geometric \Sch solitons and explicit geometric KdV solitons on surfaces of revolution.
\end{abstract}

\tableofcontents
\newpage

\section{Introduction}

The dynamics of vortex filaments has provided for almost a century
one of the most interesting connections between differential geometry
and soliton equations, and an example in which knotted curves arise as
solutions of differential equations possessing an infinite family of symmetries
and remarkably rich geometrical structures. The first mathematical model of the evolution
of an Eulerian vortex filament, an approximately one-dimensional region of the fluid where the velocity
distribution has a rotational component, was derived in 1906 by Luigi Da Rios \cite{D}, a student of Tullio
Levi-Civita. Both from mathematical and physical points of view, one has found that the
locally induction equation (also called Da Rios equation) is a fascinating nonlinear evolution equation.
It is related to the well-known Landau-Lifshitz (LL) equation which describes
the dynamics of several important physical systems such as ferromagnets, moving space curves, etc. and has intimate
connections with many of the well known integrable soliton equations, including nonlinear Schr\"{o}dinger and modified
KdV equations. It admits rich dynamical structures including solitons, dromions, vortices, etc.

A vortex filament is a curve $\gamma(t, s)$ in the Euclidean space $\Real^3$ depending on time $t$ and
arc-length parament $s\in \Real$. The evolution of $\gamma$ in Locally Induced Approximation (LIA) is
determined by the following Da Rios equation which is also called Locally Induction Equation (LIE):
\begin{equation}\nn
   \frac{\p\ga}{\p t} = \frac{\p\ga}{\p s}\times\frac{\p^2\ga}{\p s^2}, \quad \text{or} \quad \ga_t=\ga' \times \ga''.
\end{equation}
A careful derivation of the self-induced dynamics of a vortex filament can be found in G. K. Batchelor
\cite{B}. H. Hasimoto \cite{Hasimoto} discovered in 1972 that the LIE is equivalent to a complex-valued
cubic \Sch equation via the so-called Hasimoto transformation:
\begin{equation}\nn
    \Phi(\ga) = k \exp\left(i\int_0^s \tau ds'\right).
\end{equation}
where $k$ is the curvature and $\tau$ is the torsion of $\ga$.

It is well-known that the cubic \Sch equation defined on $\mathbb{R}$ admits a traveling wave soliton of the form
\begin{equation}\label{ee}
  \Phi(t, s) = \Phi_0(s-ct).
\end{equation}
From the correspondence between the solutions of
cubic \Sch equation and the solutions of LIE, it is not difficult to
see that (\ref{ee}) gives a soliton type solution $\ga(s, t)$ to the LIE whose curvature $k$ and torsion $\tau$ satisfy
\begin{equation}\label{e000}
    k(t, s) = k(s-ct), \quad \tau(t, s) = \tau(s-ct).
\end{equation}
Inserting (\ref{e000}) back into the cubic \Sch equation via
Hasimoto transformation, we reduce the LIE to an intrinsic soliton
equation of $k$ and $\tau$.

However, this intrinsic equation is not quite illuminating and the geometric significance is not obvious. Moreover, it is
always difficult to recover the curve $\ga$ from the intrinsic equation of $k$ and $\tau$. Hence,
a natural problem is how to understand the soliton equation from the viewpoint of global differential geometry
and reveal the hidden geometric structure of such solitons. We observe that the connection between solitons of
LIE and Killing magnetic geodesics sheds some light on this problem.

The so-called \emph{magnetic geodesic} is just the trajectory of a charged particle moving on a Riemannian manifold
with a magnetic field. The electron experiences a \emph{Lorentz force} $F(\,\cdot\,)$ and the magnetic geodesic $\ga$ satisfies equation
\begin{equation}\nn
    \nabla_{\ga'}\ga' = F(\ga').
\end{equation}
In particular, if the manifold is three dimensional, then the Lorentz force
is related to a vector field $V$ and the equation of magnetic geodesic is
\begin{equation}\label{e0000}
    \nabla_{\ga'}\ga' = V \wedge \ga',
\end{equation}
where $\wedge$ is the cross product induced by the Riemannian metric. If the vector $V$
is a Killing field, then a solution to the above equation is called a \emph{Killing magnetic geodesic}.

Many mathematicians and physicists have been interested in the existence of closed geodesics on a manifolds.
It seems that to study the existence of closed magnetic geodesics on a manifold is a more challenging problem. Even
for the case of two dimensional manifolds, it is a hard and long-standing problem and related to the problem of finding
curves with prescribed geodesic curvature. There is a large amount of works on this field using different
approaches in the last decades, and the results on existence of closed magnetic geodesics turn out to be subtle under various
assumptions. We refer to \cite{NT, G, Sch} and references therein.

On the other hand, the LIE can also be well-defined on
three dimensional manifolds by
\begin{equation}\label{e00000}
  \ga_t = \ga' \wedge \nabla_{\ga'}\ga'.
\end{equation}
In \cite{Barros}, the authors investigated the Killing magnetic geodesic in three dimensional space forms carefully and by
verifying the intrinsic equations, they found that the Killing magnetic geodesics are just solitons of the LIE. However,
this coincidence seems somewhat surprising and mysterious. In fact, one has also found that some other important but apparently irrelevant
physical models, such as elastic curves, share the same reduced equation for their respective solitons.
We refer to \cite{Fukumoto97} for more details and examples.

In this paper, our primary goal is to provide a more global and geometric view on the connection between solitons
of LIE and magnetic geodesics. The key observation is that the soliton of LIE is essentially
decided by two families of isometries. Namely, they are of the following form:
\begin{equation}\label{e0}
    \ga(s, t) = \phi_t\circ\ga_0\circ\psi_t(s),
\end{equation}
where $\phi_t$ is a one-parameter group of isometries of the ambient space, $\psi_t(s)$ is a one-parameter group of isometries of the domain, and $\ga_0$ is a curve independent of time $t$.

If we assume that $\phi_t$ is generated by a Killing vector field $V$, then by putting the form (\ref{e0}) back into the LIE (\ref{e00000}), we will get a reduced equation for $\ga_0$,
which is exactly equation (\ref{e0000}). In the specific setting of classical LIA, the ambient space is the Euclidean space $\Real^3$ and $\phi_t$ consists of translations and rotations,
while the domain is $\Real$ and $$\psi_t(s) = s-ct$$ is just a shifting of the parameter $s$ with speed $c$. Indeed, the soliton $\ga$ is generated by rotating and translating the initial
curve $\ga_0$, which turns out to be a Killing magnetic curve, with a slipping motion along the curve itself.

On the other hand, the LIE can be viewed as a Hamiltonian flow and there is an important generalization, namely, the \Sch flows for maps from
 a Riemannian manifold into a symplectic manifold \cite{BIKT,DW,Jurd,TU}. The \Sch flow is an important geometric flow which is closely related to the harmonic map and wave map. However, there seems to be no analog of soliton solution to the \Sch flow except, in some literature, the term
 ``soliton" refers to harmonic maps which is just a stationary solution of the flow equation. The significance of our observation is that a solution of the form (\ref{e0}) essentially gives a unified definition of solitons for geometric flows on manifolds.

More precisely, suppose $(M, g)$ is a Riemannian manifold and $(N, \om)$ is a symplectic manifold with a compatible almost complex structure $J$. The \Sch flow for a map $u:[0, T)\times M \to N$ is
\[\frac{\partial u}{\partial t} = J(u)\tau(u),\]
where $$\tau(u) := \mbox{trace}_g \nabla du$$ is the tension field. Then the soliton type solution of the \Sch flow analogous to (\ref{e0}) should has the following form:
\begin{equation}\label{e00}
  u(t, x) = \phi_t\circ u_0\circ \psi_t(x),
\end{equation}
where $\psi_t$ and $\phi_t$ are one-parameter groups of isometries
of $M$ and $N$ respectively, and $$u_0:M\to N$$ is a map independent
of time $t$. Since this kind of soliton solution is closely involved with
the geometry of the underlying manifolds, we
should call them ``\emph{geometric solitons}". (See
Definition~\ref{dfn:soliton} below for more details.) We will see
that many important and special models arising in mathematical
physics fall into the scale of geometric solitons, which provide us
solid evidences to justify the name and significance of geometric
soliton. In particular, we found that the geometric soliton of
1-dimensional \Sch flow on Riemannian surfaces turns out to be a magnetic geodesic,
and the well-known equivariant solutions of the
\Sch flow for maps from $\mathbb{R}^2$ or $S^2$ into a K\"ahler
manifold with axial symmetry (see \cite{GKT}) has the same structure as geometric solitons.

The geometric soliton can also be defined for the geometric KdV(Korteweg-de Vries) flow which is recently proposed by Sun and Wang \cite{SunW}. The geometric KdV flow also origins from the movement of vortex filaments, this time along with an axial flow, which transforms to a classical mKdV equation under the Hasimoto transformation. From the viewpoint of integrable systems, the geometric KdV flow from $\Real$ into $\Sphere^2$ belongs to the same hierarchy of the corresponding \Sch flow. Similarly, the geometric KdV soliton is defined as a solution of the geometric KdV flow in the form (\ref{e00}). In fact, this kind of geometric solitons can be defined for all similar Hamiltonian flows on manifolds.

The next goal is of course seeking the above geometric solitons. By inserting (\ref{e00}) back into the flow equation, we are led to study an equation of the generating map $u_0$ which is independent of the time variable $t$. For the \Sch flow, we get an elliptic system and $u_0$ turns out to be a harmonic map with potential. For the geometric KdV flow, we get a third-order o.d.e. equation. Each solution $u_0$ to the reduced equation generates a geometric soliton of the geometric flow. However, in general the reduced equation is very hard to solve except some special cases where the underlying manifolds are simple.

This paper is organized as follows. First we recall the classical traveling wave solutions of the LIE of vortex filament with or without axial flow, as well as elastic curves as solitons of LIE. Next we show the equivalence of these solitons and Killing magnetic geodesics extrinsically using our observation (\ref{e0}). Then after a brief review of the of \Sch flow and geometric KdV flow, we move on to give the definition of geometric solitons. Meanwhile, we derive a reduced equation for the geometric solitons. Finally, we show some examples when the underlying manifolds are simple. This includes magnetic geodesics as geometric \Sch solitons, and explicit examples of geometric KdV solitons on surfaces of revolution.

\section{Solitons of LIE}

\subsection{Localized Induction Approximation}

The motion of a thin vortex filament in an inviscid incompressible fluid is described asymptotically by the so-called \emph{Locally Induction Approximation(LIA)}. This is a minimum model which was first discovered by Da Rios in 1906:
\begin{equation}\label{equ:LIA}
  \ga_t = \ga' \times \ga''.
\end{equation}

If $\ga$ is parameterized by arclength and $k, \tau$ denotes the curvature and torsion of $\ga$ respectively, then under the Frenet frame $\{\mathbf{t},\mathbf{n},\mathbf{b}\}$, we have
\begin{equation*}
  \ga' = \mathbf{t}, \quad \mathbf{t}' = k \mathbf{n}, \quad \mathbf{n}' = -k\mathbf{t} + \tau\mathbf{b}.
\end{equation*}
Thus equation (\ref{equ:LIA}) becomes
\[ \ga_t = k \mathbf{b}. \]
\begin{rem}
  If $s$ is the arclength parameter of $\ga(0,\cdot)$ at the initial time $t =0$, it is easy to verify that $s$ is arclength parameter of $\ga(t, \cdot)$ for all time $t>0$.
\end{rem}

Hasimoto found a famous transformation which is now referred as the \emph{Hasimoto transformation}
\begin{equation}\label{Hasimoto}
    \Phi(\ga) = k \exp\left(i\int_0^s \tau ds'\right).
\end{equation}
Using this transformation, equation (\ref{equ:LIA}) collapses to a cubic \Sch equation for the complex valued function $\Phi$:
\begin{equation}\label{equ:sch}
    -i\Phi_t = \Phi_{ss} + \frac{1}{2}(|\Phi|^2 + A)\Phi,
\end{equation}
where $A$ is a real function of time variable $t$ (which can by eliminated by shifting the origin).

Note that the function $\Phi$ only depends on the intrinsic quantities of the curve $\ga$. Namely, $\Phi$ is given by the curvature
$k$ and torsion $\tau$ which is independent of the choice of coordinates and the parameter of $\ga$. Thus equation (\ref{equ:sch})
should be regarded as the intrinsic version of the LIE (\ref{equ:LIA}).

A remarkable feature of the cubic \Sch equation (\ref{equ:sch}) is that it admits a type of solitary wave solutions which demonstrate a kind of self-similarity. These solutions have the form
\begin{equation}\label{equ:travelling-wave}
  \Phi(t, s) = \Phi_0(s-ct)
\end{equation}
where $c$ is a constant describing the speed of the wave and $\Phi_0$ satisfies an elliptic equation:
\[ ic\Phi_0' = \Phi_0'' + \frac{1}{2}(|\Phi_0|^2+ A)\Phi_0. \]
Obviously, such solutions correspond to a type of solitary wave solution $\ga(t, s)$ of the LIE (\ref{equ:LIA}) with
\begin{equation}\label{equ:LIA-solitary0}
    k(t, s) = k(s-ct), \quad \tau(t, s) = \tau(s-ct).
\end{equation}
Putting this back to equation (\ref{equ:sch}), we get the intrinsic soliton equation for $k$ and $\tau$ as the following:
\begin{equation}\label{equ:LIA-solitary1}
\left\{
  \begin{aligned}
  & k'' + \frac{1}{2}k^3 - k\tau^2 + A(t)k = - ck(\tau - \tau(-ct)), \\
  & 2k'\tau + k\tau' = ck'.
  \end{aligned}
\right.
\end{equation}
In the next subsection, we will see that the intrinsic equation of elastic curves coincides with the above equation.

\subsection{Elastic curves as solitons of LIE}

An interesting and remarkable discovery noted by Hasimoto in \cite{Hasimoto} is that the soliton of LIE is related to elastic curves. The problem of elastic curve was first proposed by Bernoulli to Euler in 1744. In mathematics, the bending energy of a thin inextensible wire $\ga$ in 3 dimensional Euclidean space is described by the total squared curvature $$E(\ga) = \int_\ga k^2 ds,$$ where $k$ is the curvature of the curve $\ga$. A curve is called an \emph{elastic curve} if it minimizes the bending energy $E$.

In practice, we often use a modified energy which depends on the differentials of the curve explicitly. Namely, we define an elastic curve to be a critical point of the following energy:
\[ F(\ga) = \frac{1}{2}\int |\ga''|^2ds + \int \Lambda(s)(|\ga'|^2 - 1) ds, \]
where $\Lambda(s)$ is a point-wise Lagrangian multiplier, constraining the speed of the curve. Obviously, such an elastic curve in equilibrium satisfies the Euler-Lagrangian equation:
\[ \ga'''' - \frac{d}{ds}(\Lambda\ga') = 0. \]
Suppose $\ga$ is parameterized by arc-length and using the Frenet frame along the curve, the above equation can by rewrite as:
\begin{equation}\label{equ:elastica}
\left\{
  \begin{aligned}
  & k'' + \frac{1}{2}k^3 - k\tau^2 - \frac{\lambda}{2}k = 0, \\
  & 2k'\tau + k\tau' = 0.
  \end{aligned}
\right.
\end{equation}
Here $\lambda$ is a constant which is determined by the Lagrangian multiplier $$\Lambda = -\frac{3}{2}k^2 + \frac{\lambda}{2}.$$
The second equation integrates to give a point-wise constant quantity
\[k^2 \tau = c,\]
and the first equation becomes
\begin{equation}\label{e12}
  k^3k'' + \frac{1}{2}k^6 - \frac{\lambda}{2}k^4 - c^2 = 0
\end{equation}
With the help of elliptic functions, one can solve equation (\ref{e12}) to get (refer to \cite{Singer})
\begin{equation}\label{e122}
  k^2 = k_0^2\left(1-\frac{p^2}{w^2}\text{sn}^2\left(\frac{k_0}{2w}s, p\right)\right),
\end{equation}
where $\text{sn}(\cdot, p)$ denotes the elliptic sine function with parameter $p$, and the parameters $p$, $w$ and $k_0$
are related to constants $\lambda$ and $c$ satisfying $0\le p \le w \le 1$ and
\begin{equation}\nn
\left\{
  \begin{aligned}
  & 2\lambda w^2 = k_0^2(3w^2 - p^2 - 1),\\
  & 4c^2w^4 = k_0^6(1-w^2)(w^2-p^2).
  \end{aligned}
\right.
\end{equation}

Now let us recall the intrinsic equation (\ref{equ:LIA-solitary1}) for solitons of the LIE. If we introduce a new variable
$\xi := s -ct$ which is independent of $t$. Then the only two terms dependent of $t$ in the first equation of (\ref{equ:LIA-solitary1})
should cancel with each other. Namely, we have $$A(t)k - c k\tau(-ct) = A_1 k,$$ where $A_1$ is a constant. Comparing equation
(\ref{equ:LIA-solitary1}) to equation (\ref{equ:elastica}), one can easily find that a soliton of the LIE with $c = 0$
is exactly an elastic curve. Thus we get a special family of solitons of the LIE given by (\ref{e122}).

\begin{rem}
  Langer and Singer give a complete classification of elastic curves in $\Real^3$. The elastic curve may also be defined on 3 dimensional manifolds and similar results can be obtained. Refer to \cite{Singer} and references therein. Thus we may obtain many potential solitons of the LIE. However, it is somehow difficult to recover all the formations of possible solitons from the expression of $k$ and $\tau$. This was accomplished by Kida in \cite{Kida}.
\end{rem}

\subsection{Vortex filament with axial flow}

A vortex tube is often accompanied with an axial flow in the core. This phenomenon was modeled by Moore and Saffman \cite{MS} and then simplified by Fukumoto and Miyazaki \cite{Fukumoto91} as the following equation:
\begin{equation}\label{equ:LIA-axial}
    \ga_t = \alpha \ga' \times \ga'' + \beta [\ga''' + \frac{3}{2} \ga''\times(\ga'\times\ga'')],
\end{equation}
where $\alpha$ and $\beta$ are two constant numbers. In fact, the right hand of the above equation is a linear combination of the first two vector fields of the Localized Induction Hierarchy \cite{LP}. Under the Frenet frame, equation (\ref{equ:LIA-axial}) reads:
\begin{equation}\label{equ:LIA-axial2}
    \ga_t = \alpha k\mathbf{b} + \beta (\frac{1}{2}k^2\mathbf{t} + k'\mathbf{n} + k\tau\mathbf{b}),
\end{equation}

It's obvious that when $\alpha =1$, $\beta=0$, equation (\ref{equ:LIA-axial}) reduces to the LIE (\ref{equ:LIA}). On the other hand, if $\alpha=0$, $\beta=1$ the equation becomes:
\begin{equation}\label{equ:LIA-axial1}
    \ga_t = \ga''' + \frac{3}{2} \ga''\times(\ga'\times\ga''),
\end{equation}
Applying the Hasimoto transformation (\ref{Hasimoto}), equation (\ref{equ:LIA-axial1}) can be rewrite as a complex valued mKdV equation:
\begin{equation}\label{equ:KdV}
    \Phi_t = \Phi_{sss} + \frac{3}{2}|\Phi|^2\Phi_s.
\end{equation}
This is the second equation in the nonlinear \Sch hierarchy and it also admits a solitary wave soliton which satisfies (\ref{equ:travelling-wave}), and hence (\ref{equ:LIA-solitary0}). Following the same spirit, one can compute the intrinsic equation of a soliton type solution to equation (\ref{equ:LIA-axial1}):
\begin{equation}\label{equ:KdV-solitary1}
\left\{
  \begin{aligned}
  & 3k''\tau + 3k'\tau' + k\tau'' - k\tau^3 + \frac{3}{2}k^3\tau= - ck(\tau - \tau(-ct)), \\
  & k''' - 3k'\tau^2 - 3k\tau\tau' + \frac{3}{2}k^2k' = -ck'.
  \end{aligned}
\right.
\end{equation}

\section{Killing Magnetic geodesics}

\subsection{Killing magnetic geodesic}

A \emph{magnetic field} on a Riemannian manifold $(M, g)$ is characterized by a closed 2-form $\Omega \in \Lambda^2(M)$ which denotes the set of all smooth section of $\Lambda^2(T^*M)$.
A moving particle of charge in the presence of a magnetic field $F$ will experience a \emph{Lorentz force} $F$ which is represented by a skew-symmetric (1, 1) tensor defined by
\[ g(F(X), Y) = \Omega(X, Y), \quad \forall X, Y \in \Gamma(TM). \]
Here $\Gamma(TM)$ denotes the set of smooth vector fields on $M$.

The \emph{magnetic geodesic} $\ga$ describes the motion of an electron in a magnetic field which satisfies equation
\begin{equation}\label{equ:mg}
    \nabla_{\ga'}\ga' = F(\ga').
\end{equation}

In particular, if the manifold $M$ is 3 dimensional, then the 2-form $\Omega$ is equivalent to a vector field $V$ via the Riemannian metric $g$. Indeed, one can define $V = (\star \Omega)^\#$, where $\star$ is the Hodge star operator and $^\#$ denotes the promotion given by the metric $g$. Moreover, $d\Omega = 0$ implies
$$\mbox{div} (V) = \mbox{trace}_g(\nabla V)= 0.$$ On the other hand, we can also define a cross product using the volume form $\Omega_0$ by the formula:
\[ g(X\wedge Y, Z) = \Omega_0(X, Y, Z),  \quad \forall X, Y, Z \in \Gamma(TM). \]
In this case, it is easy to check that the Lorentz force is $F(X) = V\wedge X$. Thus the equation of magnetic geodesic (\ref{equ:mg}) becomes:
\begin{equation}\label{equ:kmg1}
    \nabla_{\ga'}\ga' = V \wedge \ga'.
\end{equation}
Especially if the vector field is a Killing field, that is, $V$ is a vector field which generates a family of 1-parameter group of isometries,
then a solution $\ga$ to equation (\ref{equ:mg}) is called a \emph{Killing magnetic geodesic}.  Note that the Killing magnetic curves can be explicitly determined, see \cite{MN, RM1, RM2}.

In \cite{Barros}, the authors discovered a Killing magnetic geodesic with arc-length parameter in 3 space form satisfies the following equivalent equation of (\ref{equ:kmg1}):
\begin{equation}\label{equ:kmg2}
\left\{
  \begin{aligned}
  & k'' + \frac{1}{2}k^3 - k\tau^2 + Ak = - \omega k\tau, \\
  & 2k'\tau + k\tau' = \omega k',
  \end{aligned}
\right.
\end{equation}
where $A$ is a constant number and $\omega$ is the so-called quasislope defined by
\[ \omega = \lg V, \mathbf{t} \rg = const.\]

It is easy to see that this intrinsic equation (\ref{equ:kmg2}) is exactly the same with the equation derived for the
solitary wave solution (\ref{equ:LIA-solitary1}) of LIE. The interesting connection between vortex filament and Killing
magnetic geodesic in three-dimensional space forms has been addressed in \cite{Barros}. But their observation is base on
comparing the intrinsic equations of the curvature and the torsion, which seems somewhat obscure and mysterious.
In the next subsection we are intended to explain the equivalence of Killing magnetic curves and solitons of LIE
from a more global and extrinsic view point, which seems more natural and illuminating. This will motivate the definition of geometric soliton in the next section.

\subsection{Relation to solitons of LIE}

Now we try to explain the equivalence of Killing magnetic curves in Euclidean space $\mathbb{R}^3$ and solitons of LIE.
Suppose $\ga(s, t)$ is a solitary wave solution of the LIE which comes from the soliton of the corresponding \Sch equation
via the Hasimoto transformation. Then the curvature and torsion of $\ga$ satisfies equation (\ref{equ:LIA-solitary0}), i.e.
$$k(s, t) = k(s-ct), \quad \tau(s, t) = \tau(s-ct).$$
We need to derive the underlying equation for the soliton $\ga$. Note that the above equation does \emph{not} necessarily imply that
the curve also satisfies $$\ga(s, t) = \ga(s -ct).$$ Indeed, both the curvature $k$ and the torsion $\tau$ are intrinsic
properties of the curve. Namely, they stay unchanged if the curve $\ga$ varies by an isometry, which corresponds to a translation and a rotation in the Euclidean space $\Real^3$. Therefore, we may suppose that there exists a one-parameter  group of isometries $\phi_t$ and a curve $\ga_0$ such that
\begin{equation}\label{equ:LIA-solitary2}
    \ga(s, t) = \phi_t(\ga_0(s -ct)).
\end{equation}
If the one-parameter  group of isometries $\phi_t$ is generated by a Killing vector field $V$, then we may compute
\begin{equation}\label{e1}
\begin{split}
  \frac{d}{dt}\ga(s,t) &= V(\phi_t(\ga_0)) + d\phi_t(\ga_0)(-c\ga_0')\\
  &= d\phi_t(V(\ga_0)) + d\phi_t(\ga_0)(-c\ga_0').
\end{split}
\end{equation}
On the other hand, we have
\begin{equation}\label{e2}
  \frac{d}{ds}\ga(s,t) = d\phi_t(\ga_0)(\ga_0')
\end{equation}
and
\begin{equation}\nn
  \frac{d^2}{ds^2}\ga(s,t) = d\phi_t(\ga_0)(\ga_0'') + d^2\phi_t(\ga_0)(\ga_0', \ga_0').
\end{equation}
Since $\phi_t$ is an isometries, we have $d^2\phi_t(\ga_0)(\ga_0', \ga_0') = 0$. Hence
\begin{equation}\label{e3}
    \frac{d^2}{ds^2}\ga(s,t) = d\phi_t(\ga_0)(\ga_0'').
\end{equation}
Putting (\ref{e1}), (\ref{e2}) and (\ref{e3}) back to the LIE (\ref{equ:LIA}), we can get
\begin{equation}\nn
    d\phi_t(\ga_0')\times d\phi_t(\ga_0'') = d\phi_t(V(\ga_0)) + d\phi_t(-c\ga_0'),
\end{equation}
which implies
\begin{equation}\label{equ:LIA-solitary3}
    \ga_0'\times \ga_0'' = V(\ga_0) - c\ga_0'.
\end{equation}
Comparing equation (\ref{equ:LIA-solitary3}) with equation (\ref{equ:kmg1}), we find out that they are equivalent.
That is, if $\ga$ is a solitary wave solution of the LIE (\ref{equ:LIA}), $\ga_0$ turns out to be exactly a Killing
magnetic geodesic in $\mathbb{R}^3$. Or inversely, every Killing magnetic geodesic $\ga_0$ in $\mathbb{R}^3$
generates a soliton type solution $\ga$ to the LIE.

\section{Geometric Solitons}

\subsection{Geometric Hamiltonian flows}

It is well-known that there exists a direct relation between the Landau-Lifshitz equation and the Da Rios equation. One can obtain
the Landau-Lifshitz equation by differentiating the Da Rios equation if the vortex filament curve is parameterized by arc-length parameter.
About a century after the discovery of the Da Rios equation, mathematicians began to realize that the Landau-Lifshitz equations stemming
from vortex filaments can be generalized to a Hamilton flow for maps from a Riemannian manifold into a symplectic manifold, i.e., the \Sch
map flows for maps into a symplectic manifold.

Recently, Sun and Wang \cite{SunW} proposed another geometric flow now referred as the geometric KdV flow which belongs to the same
hierarchy of the \Sch flow when the domain manifold is $\mathbb{R}$ and target is a compact Hermitian symmetric space \cite{TU}.
For the reader's convenience, we include here the definitions of these flows.

Suppose $(M, g)$ is a Riemannian manifold and $(N, \omega)$ is a Symplectic manifold. Assume $N$ is embedded in an Euclidean space $\Real^K$, $J$ is a tamed almost complex structure and $h$ is the corresponding Riemannian metric such that
\[ \omega(\xi, \eta) = h(\xi, J\eta), \forall \xi, \eta \in T_yN.\]
Let $W$ be the Sobolev space defined by
\[ W := \{u \in W^{1,2}(M, \Real^K) | u(x) \in N ~\text{for a.e.}~ x\in M\}. \]
Then the tangent space of $W$ at a map $u \in W$ is the pull-back tangent bundle $T_u W = u^*(TN)$. There is a natural symplectic form $\Omega$ on $W$ induced by $\omega$. Namely, for any $X, Y\in T_uW$,
\[ \Omega(X, Y) = \int_M \omega(X(u), Y(u)) dV_g. \]
Consider the energy functional
\[ E_1(u) = \frac{1}{2}\int_M |\nabla u|^2 dV_g \]
as a Hamiltonian function on $W$, then the associated Hamiltonian vector field $X_{E_1}$ satisfies
\[ \Omega(X_{E_1}, \cdot) = dE_1(\cdot). \]
Thus we can define the Hamiltonian flow by
\begin{equation}\label{e4}
    u_t = X_{E_1}(u) = J\nabla E_1(u).
\end{equation}
It is well-known that $$\tau_1(u) := \nabla E_1(u) = \Delta u + A(u)(\nabla u, \nabla u)$$ is the tension field where $A(u)$ denotes the second fundamental form. So the Hamiltonian flow (\ref{e4}) of $E_1$ is just the \Sch flow
\begin{equation}\label{equ:sch-flow}
    u_t = J\tau_1(u).
\end{equation}
When the dimension of $M$ is 1, namely, when $M = \Real^1$ or $M = S^1$, there is another important functional, namely, the pseudo-helicity, given by:
\[ E_2(u) = \frac{1}{2}\int_M \lg \nabla_x u_x, J u_x \rg. \]
Similarly, one can also define the Hamiltonian flow of $E_2$ by
\begin{equation}\label{equ:kdv-flow}
    u_t = J\tau_2(u),
\end{equation}
where $$ \tau_2(u) := \nabla E_2(u) = -J\nabla_x^2 u_x + \frac{1}{2} R(u_x, Ju_x)u_x$$
and $R$ is the curvature tensor of $N$. This is the geometric KdV flow defined by Wang and Sun recently, see \cite{SunW} for more details.

Next let's explain the relation between the above geometric flows and LIE.
Suppose $\gamma(x)$ is a closed curve in $\Real^3$ and is parameterized by arc-length, then $u:= \gamma'$ defines a map form $S^1$ into $S^2$ since $|u| = |\gamma'| =1$. Recall that there is the Frenet frame $\{\mathbf{t}, \mathbf{n}, \mathbf{b}\}$ along $\gamma$ in $\Real^3$ with $\mathbf{t} = \ga'$. On the other hand, there is an orthonormal basis $\{T, N\}$ on $\Sphere^2$ along $u$ where $T$ is the unit tangent vector along $u$ and $N$ is the normal vector. Note that the complex structure on $\Sphere^2$ is $J := u\times$ and we may suppose $N :=J T$. Then, we have
\[ u_x = \gamma'' = \mathbf{t}' = k \mathbf{n} = v T, \]
where $k$ is the curvature of $\gamma$ and $v:=|u_x|$ is the speed of $u$. Thus
\begin{equation}\label{e5}
    \mathbf{n} = T, ~ k = v.
\end{equation}
Furthermore,
\begin{equation*}
  \begin{split}
    \nabla_x u_x &= \nabla_{(vT)} (vT) = v_xT + v^2k_g N\\
     &= (\gamma''')^\top = (k'\mathbf{n} -k^2 \mathbf{t} + k\tau\mathbf{b})^\top\\
     & = k'\mathbf{n} + k\tau\mathbf{b},
  \end{split}
\end{equation*}
where $k_g$ is the geodesic curvature of $u$. Without loss of generality, we may assume that $ k \neq 0$, then
\begin{equation}\label{e6}
    \mathbf{b} = N, \quad\quad k_g = \frac {\tau}{k}.
\end{equation}

Now one can easily find that in this case, the Hamiltonian functions are just integrals of the curve $\gamma$ that arise from the study of elastic curves. Indeed,
\begin{equation*}
  \begin{split}
    &E_1(u) = \int_{S^1} |u_x|^2 ds = \int_{S^1} k^2 ds.\\
    &E_2(u) = \int_{S^1} \lg \nabla_x u_x, u\times u_x \rg ds = \int_{S^1} k^2\tau ds.
  \end{split}
\end{equation*}
On the other hand, after differentiating with respect to $x$, the Da Rias equation (\ref{equ:LIA}) becomes
\[ u_t = u \times u_{xx} \]
which is just the \Sch flow (\ref{equ:sch-flow}) on $S^2$. Moreover, the LIE of vortex filament with axial flow (\ref{equ:LIA-axial}) becomes
\[ u_t = \alpha u \times u_{xx} + \beta [u_{xxx} + \frac{3}{2}(u_x\times(u\times u_x))_x]. \]
One may easily check that the latter part of the above equation is exactly the geometric KdV flow (\ref{equ:kdv-flow}) on $S^2$.

\begin{rem}
  The geometric flow naturally connects the two apparently irrelevant but important physics models: elastic curves and vortex filaments.
  This relation remains valid in the case of three dimensional space forms, see \cite{Jurd}. The remarkable coincidence was stressed by
  many authors, but usually observed by comparing the intrinsic equations. Our perspective via the geometric flows provides a more global
  and geometric view. Namely, the LIA hierarchy is the Hamiltonian flow corresponds the bending energies of elastic curves and rods.
\end{rem}

\subsection{Geometric solitons}

Inspired by the relation between solitons of vortex filaments and Killing magnetic geodesics, we propose a new concept of geometric
soliton of the Hamiltonian flows, which can be regarded as a geometric generalization of the classical solitary wave solitons.

\begin{dfn}\label{dfn:soliton}
  Suppose $(M, g)$ is a Riemannian manifold and $(N, h, J)$ is a K\"ahler manifold. A solution $u$ to the Hamiltonian flow
  (\ref{equ:sch-flow}) or (\ref{equ:kdv-flow}) is called a geometric soliton, if there exist two one-parameter groups of
  isometries $\psi_t:M \to M$, $\phi_t:N \to N$ and a map $v: M \to N$ independent of $t$ such that $u$ has the form:
  \[  u(t, x) = \phi_t\circ v \circ \psi_t(x). \]
\end{dfn}

One can also define a geometric soliton which only involves the isometries of the target manifold. Namely, solitons
which have the form $u(t, x) = \phi_t \circ v(x)$ where $\phi_t$ is a family of isometries of the target manifold.
This kind of geometric solitons of the \Sch flow was first noted by Ding and Yin \cite{DY} in order to find periodic solutions
to the \Sch flow into $S^2$. Song and Wang \cite{SW} investigated geometric solitons of \Sch flow into Lorentzian manifolds
and successfully obtained soliton solutions of the hyperbolic Ishimori system. It is worthy to point out that the existence
of geometric solitons is conjectured to be related to the first eigenvalue of the underlying manifolds.

The concept of geometric solitons is a natural extension of the solitary wave solution to the LIE found by Hasimoto,
or the classical solitons of the \Sch equation and KdV equation. Indeed, a solitary wave solution of the LIE satisfies (\ref{equ:LIA-solitary2}), i.e.
\begin{equation*}
    \ga(s, t) = \phi_t(\ga_0(s -ct)),
\end{equation*}
where $\phi_t$ is the isometry of the $3$ dimensional Euclidean space consisting of translations and rotations.
The only difference at first glance is that the traveling speed $c$ of the solitary wave is missing in our definition
of geometric solitons. The effect of $c$ corresponds to a slipping motion along the curve itself. However, if we regard the transformation
$$\psi_t(s) = s - ct$$
as an isometries of the real line, the consistence of the definition of geometric soliton and classical solitary waves is justified. On the other hand, for a linear \Sch equation
\[ iu_t = \Delta u \]
defined on a flat torus $\Torus^m$, a solitary wave solution is of the form $u(t) = e^{ikt}v$ where $k$ is a positive constant and $v$ is a real function which satisfies the equation
\[ \Delta v + kv = 0. \]
Here, $\phi_t = e^{ikt}$ can be viewed as an isometric group and again this classical solitary wave solution falls in the scale of above defined geometric soliton.

Another important example is the well-known equivariant solution of \Sch flow on manifolds with axial symmetry. The $m$-equivariant map from $\Real^2$ to $\Sphere^2$ has the form
\[ u(r, \theta) = e^{m\theta R} v(r), \]
where $(r, \theta)$ are polar coordinates on $\Real^2$, $m$ is an integer, $v : [0,\infty) \to \Sphere^2$ is a map
independent of $t$, and $R$ is the matrix generating rotations around the $u^3$-axis. In a given homotopy class,
the $m$-equivariant harmonic map $Q_m$ minimizes the Dirichlet energy. A general problem on \Sch flow which has
attracted a considerable attention for the past ten years is to describe the flow for initial data near $Q_m$,
see for example \cite{GKT}. A major break-through on \Sch flow recently made in \cite{MRR} is the discovery
of a finite time blow up profile. The formation of singularity corresponds to the concentration of a universal bubble given by
\begin{equation}\label{e:bubble}
  u_0(x, t) = e^{\Theta(t)R}Q_1\left(\frac{x}{\lambda(t)}\right).
\end{equation}
If we regard $\phi_t = e^{\Theta(t)R}$ and $\psi_t(x) = \frac{x}{\lambda(t)}$ as one-parameter group of transformations, then the bubble (\ref{e:bubble}) has the same structure with our geometric soliton.

After modulo the isometries of the underlying manifolds, the only thing left is the time-independent map $v$. Just as the relation of solitons and Killing magnetic curves shown in the last section, the map $v$ satisfies a reduced equation. This is formulated in the following theorem.

\begin{thm}\label{thm1}
  Suppose the one-parameter groups of isometries $\{\phi_t\}, \{\psi_t\}$ are generated by a holomorphic Killing vector field
  $V \in \Gamma(TN)$ and a Killing vector field $W \in \Gamma(TM)$ respectively. Then\\
  i) $u = \phi_t\circ v \circ \psi_t$ is a geometric soliton of the \Sch flow (\ref{equ:sch-flow}) iff $v$ satisfies
  \begin{equation}\label{equ:sch-geometric-soliton}
    J\tau_1(v) = V(v) + \nabla_W v;
  \end{equation}
  ii) $u = \phi_t\circ v \circ \psi_t$ is a geometric soliton of the KdV flow (\ref{equ:kdv-flow}) iff $v$ satisfies
  \begin{equation}\label{equ:kdv-geometric-soliton}
    J\tau_2(v) = V(v) + \nabla_W v.
  \end{equation}
\end{thm}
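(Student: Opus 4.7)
The plan is to mimic the extrinsic derivation carried out for the LIE in Section 3.2, generalizing it from curves in $\Real^3$ to maps between manifolds. The whole proof rests on two ingredients: (a) the chain rule for the time derivative of $u(t,x) = \phi_t \circ v \circ \psi_t(x)$, and (b) the equivariance of the geometric operators $\tau_1$ and $\tau_2$ under isometries of the domain and target.

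First I would compute $u_t$ at a point $x\in M$. Since $V$ generates $\phi_t$ and $W$ generates $\psi_t$, the chain rule yields
\[ u_t(x) = V(u(t,x)) + d\phi_t\bigl(dv(W(\psi_t x))\bigr). \]
Because $\phi_t$ is the flow of $V$, one has $V \circ \phi_t = d\phi_t \circ V$, so the right-hand side factors through $d\phi_t$:
\[ u_t(x) = d\phi_t\bigl(V(v(\psi_t x)) + \nabla_W v(\psi_t x)\bigr). \]
Next I would compute $\tau_1(u)(x)$. Since $\psi_t$ is an isometry of $(M,g)$ it preserves the Levi-Civita connection, and similarly $\phi_t$ preserves the connection on $N$; a standard computation in an orthonormal frame then gives the equivariance identity
\[ \tau_1(\phi_t \circ v \circ \psi_t)(x) = d\phi_t\bigl(\tau_1(v)(\psi_t x)\bigr). \]
The hypothesis that $V$ is \emph{holomorphic} Killing means $\phi_t$ preserves $J$, i.e. $d\phi_t \circ J = J \circ d\phi_t$. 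Combining these,
\[ J\tau_1(u)(x) = d\phi_t\bigl(J\tau_1(v)(\psi_t x)\bigr). \]
Equating with $u_t$ and cancelling the invertible map $d\phi_t$ at the point $\psi_t x$, which ranges over all of $M$ as $x$ does, yields the reduced equation (\ref{equ:sch-geometric-soliton}). Since all manipulations are reversible, this also proves the converse.

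For part (ii) the argument is formally identical, with $\tau_2$ in place of $\tau_1$. The only new verification is that $\tau_2(u) = -J\nabla_x^2 u_x + \tfrac12 R(u_x, Ju_x)u_x$ is equivariant under the pair of isometries. This follows because $\tau_2$ is assembled from the Levi-Civita connection, the complex structure $J$, and the Riemann curvature tensor $R$ of $N$, all of which are preserved by holomorphic isometries. Concretely, writing $\tilde u := \phi_t \circ v \circ \psi_t$, one uses $d\psi_t\partial_x = \partial_x$ (up to a constant since $\psi_t$ is a 1-D isometry so it is a translation) together with $d\phi_t \circ \nabla_x = \nabla_x \circ d\phi_t$, $d\phi_t \circ J = J \circ d\phi_t$, and $d\phi_t \cdot R(\,\cdot\,,\,\cdot\,) = R(d\phi_t\cdot,d\phi_t\cdot)d\phi_t$ to conclude $\tau_2(\tilde u)(x) = d\phi_t(\tau_2(v)(\psi_t x))$.

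The main obstacle is establishing the equivariance of $\tau_2$ cleanly, and in particular making explicit the assumption that allows $d\phi_t$ to commute with $J$; this is precisely the \emph{holomorphicity} of the Killing vector $V$, which is essential at the step $J\tau(u) = d\phi_t \cdot J\tau(v) \circ \psi_t$. The Killing condition on $W$ is used through the fact that $\psi_t$ preserves the metric on $M$ (hence the connection and, in the 1-D case needed for KdV, the arclength derivative $\partial_x$), which is what makes the tension and KdV operators transform covariantly. Once these equivariance identities are in hand, the rest of the proof is a direct substitution mirroring equations (\ref{e1})--(\ref{equ:LIA-solitary3}) of Section 3.2.
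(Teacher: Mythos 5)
Your proposal is correct and follows essentially the same route as the paper's proof: compute $u_t$ via the chain rule together with the identity $V\circ\phi_t = d\phi_t\circ V$, establish the equivariance $\tau_i(\phi_t\circ v\circ\psi_t) = d\phi_t(\tau_i(v)\circ\psi_t)$ using that the isometries preserve the connections (equivalently, that the second fundamental form of $\phi_t$ vanishes), commute $J$ past $d\phi_t$ by holomorphicity, and cancel the invertible $d\phi_t$. You in fact supply slightly more detail than the paper for part (ii), where the paper only remarks that the argument for $\tau_2$ is analogous.
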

\begin{proof}
Since the proof of the above two equalities are essentially the same, here we only prove i) for the geometric \Sch soliton and ii) follows.

The geometric \Sch soliton $u$ has the form
\[  u(t, x) = \phi_t\circ v \circ \psi_t(x), \]
where $\phi_t$ and $\psi_t$ are one-parameter groups of isometries generated by $V$ and $W$ respectively, that is,
\[ \frac{d\phi_t}{dt}(y) = V(\phi_t(y)), ~\forall y\in N \]
and
\[ \frac{d\psi_t}{dt}(x) = W(\psi_t(x)), ~\forall x\in M.\]
The group property of $\phi_t$ implies
\[ \phi_t \circ \phi_s = \phi_{s+t} = \phi_t \circ \phi_s. \]
Differentiating the above equality with respect to $s$ at $s=0$ gives
\[ d\phi_t(V) = V(\phi_t). \]
Similarly,
\[ d\psi_t(W) = W(\psi_t). \]
Hence by differentiating $u$ with respect to $t$, we have
\begin{equation*}
  \begin{split}
    \frac{\partial u}{\partial t}(t, x) &= V(\phi_t\circ v \circ \psi_t(x)) + d\phi_t\circ dv(W(\psi_t(x)))\\
    &= d\phi_t(V(v(\psi_t(x)))) + d\phi_t(\nabla_W v(\psi_t(x))).
  \end{split}
\end{equation*}
On the other hand, for any $t$ and $\bar{x}: = \psi_t(x)$, we have
\begin{equation*}
  \begin{split}
    \nabla^2 u(t, x)
    &= \nabla(d\phi_t(\nabla v(\bar{x})))\\
    &= d\phi_t(\nabla^2 v(\bar{x})) + d^2 \phi_t( \nabla v(\bar{x}),\nabla v(\bar{x})).
  \end{split}
\end{equation*}
Since $\phi_t$ is an isometries, the second fundamental form of $\phi_t$ is zero and it is obvious that the latter part of the above equality vanishes. Thus by taking trace, we get
\[ \tau_1(u(t, x)) = d\phi_t(\tau_1(v(\bar{x}))). \]
Recall the assumption that the family of isometries $\phi_t$ is holomorphic, which means $J\circ d\phi_t = d\phi_t \circ J$. Consequently, the \Sch flow $u_t = J\tau_1(u)$ now becomes
\[ d\phi_t(V(v(\bar{x})) + \nabla_W v(\bar{x})) = J\circ d\phi_t(\tau_1(v(\bar{x}))) = d\phi_t( J\tau_1(v(\bar{x}))), \]
which is equivalent to
\[ J\tau_1(v) = V(v) + \nabla_W v. \]
\end{proof}

\section{Examples}

\subsection{Geometric \Sch solitons on surfaces}

Here we discuss the special case of one dimensional geometric \Sch solitons on surfaces. Suppose $(M^2, g)$ is a Riemann surface and $$u = \phi_t\circ\ga\circ\psi_t: \Real^1\times[0, T) \to M^2$$ is a geometric \Sch soliton, where $\phi_t$ and $\psi_t$ are one-parameter groups of isometries and $\ga$ is a curve with arc-length parameter $s$. If $\phi_t$ is generated by a Killing vector field $V$ and $\psi_t = s-ct$, then by Theorem~\ref{thm1}, the generating curve $\ga$ satisfies the reduced equation:
\begin{equation}\label{equ:1}
  J(\ga)\nabla_s\ga_s = V(\ga) - c\ga_s,
\end{equation}
where $J$ is the complex structure of $M^2$. Since $\ga$ is parameterized by arc-length, we may denote
$$T := \ga_s,\quad N :=JT \quad \text{and}\quad  \nabla_s\ga_s = k_gN,$$ where $k_g$ is the geodesic curvature of $\ga$. Then from the equation (\ref{equ:1}) we may deduce that there exists a function $q$ such that $V(\ga) = q(\ga)\ga_s$. Therefore equation (\ref{equ:1}) becomes
\begin{equation}\label{equ:2}
  \nabla_s\ga_s = (q(\ga)-c)J(\ga)\ga_s.
\end{equation}

The above equation shows that the curve $\ga$ is a magnetic geodesic on $M^2$. In fact, on a Riemannian surface $M^2$, a magnetic field $\Omega$ has the form $\Omega = k\Omega_0$ where $\Omega_0$ is the volume form of $M^2$ and $k$ is a function. Thus the corresponding Lorentz force $F$ is $F(X) = kJX$ and the equation (\ref{equ:mg}) for magnetic geodesic becomes
\begin{equation}\label{equ:3}
  \nabla_s\ga_s = k(\ga)J(\ga)\ga_s.
\end{equation}
Comparing equation (\ref{equ:2}) with (\ref{equ:3}), we may find the \Sch soliton $\ga$ is just a magnetic geodesic with $$k(\ga) = q(\ga) - c.$$

It is obvious that a magnetic curve on $M^2$ with arc-length parameter has geodesic curvature $k$. Thus magnetic geodesic is closely related to the problem of prescribed geodesic curvature. In particular, a geometric \Sch soliton with trivial Killing vector field $V = 0$ is a curve with constant geodesic curvature $c$.

It has been a long-standing problem in the last decades to find closed magnetic geodesics or curves with prescribed geodesic curvature on surfaces which was initiated by Arnold \cite{A}. The existence turns out to be subtle and somehow related to the topology and geometry of the underlying manifolds. Various results are obtained by using different approaches, including Morse-Novikov theory and symplectic methods, etc. For example, Schnieder \cite{Sch} successfully applied an infinite-dimensional index theorem to prove that there exist at least two simple closed magnetic curves on $S^2$ if certain geometric assumptions are assumed. This would certainly yield the existence of two geometric \Sch solitons on $S^2$. For more results on closed magnetic curves, we refer to \cite{G, NT, Sch} and references therein.

\subsection{Geometric KdV solitons on surfaces of revolution}

Next we deduce the equation of geometric KdV soliton surfaces of revolution in the three-dimensional Euclidean space
$\mathbb{R}^3$.

To begin with, let $\Sigma$ be a surface of revolution in
$\mathbb{R}^3$ with coordinates $z_1,z_2$ and $z_3$. Without loss of
generality, assume that $\Sigma$ is obtained by revolving a regular
plane curve $\gamma(r)$ in the plane $z_2=0$ around the $z_3$-axis
with $r$ as its arc-length parameter, say $(f(r),0,g(r))$. Then
$\Sigma$ could be expressed as
$$\Sigma=:(f(r)\cos \theta, f(r)\sin\theta,g(r)),$$
with $${f_r}^2+{g_r}^2=1,$$ where $f_r$, $g_r$ are the derivatives of
$f(r)$ and $g(r)$ respect to $r$ respectively.

Let $\phi_t(z):\mathbb{R}\times\Sigma\rightarrow \Sigma$ be the obvious one-parameter group of isometries on $\Sigma$ which is just a rotation around the $z_3$-axis with speed $\om$ given by
$$\phi_t(z)=\left(
                                  \begin{array}{ccc}
                                    \cos \om t & -\sin \om t & 0 \\
                                    \sin\om t & \cos\om t & 0 \\
                                    0 & 0 & 1 \\
                                  \end{array}
                                \right)\left(
                                         \begin{array}{c}
                                           z_1 \\
                                           z_2 \\
                                           z_3 \\
                                         \end{array}
                                       \right).
$$
Then the corresponding Killing vector field on $\Sigma$ is
$$V(p)=\omega f(-\sin\theta, \cos\theta, 0)^T, \ p = (f(r)\cos \theta, f(r)\sin\theta,g(r)) \in \Sigma.$$
Moreover, let $\psi_t(x)=x+ct$ be the one-parameter groups of isometries of $\Real$ with constant speed $c$, then the
corresponding Killing field $W$ is just the constant $c$. By Definition \ref{dfn:soliton}, a geometric KdV soliton
$\widetilde{u}(x,t): \mathbb{R}\times[0,+\infty)\rightarrow \Sigma$
on $\Sigma$ has the form $\widetilde{u}(x,t)=\phi_t\circ u \circ
\psi_t$ where $u = u(x):\mathbb{R}\rightarrow \Sigma$ is the initial map. For simplicity, we consider the 1-equivariant map $u$ with $\theta=x$ and $r=r(x)$. Then by Theorem~\ref{thm1} the initial map
$$u(x)=(f(r(x))\cos x,f(r(x))\sin x, g(r(x)) ),$$
satisfies the reduce equation
$$\nabla_x^2 u_x + \frac{1}{2}R(u_x, Ju_x)Ju_x = V(u)+\nb_W u.$$
Here $\nb_W u = cu_x$ and
$$u_x=(r'f_r \cos x-f\sin x  , r'f_r\sin x+f\cos x  , r'g_r),$$
where $r', \ta'$ denotes the derivative respect to $x$. Since $\Sigma$ is a surface, it is easy to check that
$$R(u_x, Ju_x)Ju_x=G(u)|u_x|^2u_x,$$
where $G(u)$ is the Gauss curvature of $u(x)$ on $\Sigma$. Note that on the surface of revolution $\Sigma$ the Gauss curvature
$$G(u) = G(f) = -\frac{f_{rr}}{f}$$
only depends on $f$, which means $G$ is constant along a parallel. Now we may focus on the equation
\begin{equation}\label{e7}
\nabla_x^2 u_x + \frac{G(u)}{2}|u_x|^2u_x= V(u)+cu_x.
\end{equation}

Let $v:=|u_x|$ be the speed of $u(x)$, $T={u_x\over |u_x|}$ be
the unit tangent vector of $u(x)$ and $N\in T\Sigma$ the unit
tangent vector orthogonal to $T$. Then we have
$$v^2=|u_x|^2=r'^2+f^2,$$
and
$$\nb_x u_x = \nb_x(v T)=v'T+v^2 k_g N;$$
\begin{eqnarray}\label{e8}
\nb_x^2u_x&=&\nb_x(v'T+v^2 k_g N)\nn\\
&=&(v''-v^3k_g^2)T+(3vv'k_g+v^2k_g')N,
\end{eqnarray}
where $k_g$ is the geodesic curvature.  Here we used the formula
$$\nb_TT=k_gN\ \text{and}\ \nb_TN=-k_gT.$$
Substituting (\ref{e8}) into (\ref{e7}) yields
\begin{eqnarray}\label{e9}
(v''-v^3k_g^2+{G(f)\over 2}v^3-c)T+(3vv'k_g+v^2k_g')N=\left(
                  \begin{array}{c}
                    -\om f\sin\ x \\
                    \om f\cos\ x \\
                    0\\
                  \end{array}
                \right).
\end{eqnarray}
Let $$V(u)= k_1T+k_2 N,$$ it is easy to check that
\begin{equation}\label{e11}
k_1=V(u)\cdot T={\om f^2 \over v}, \quad
k_2=V(u)\cdot N=-{\om fr'\over v}.
\end{equation}
In view of (\ref{e11}) and (\ref{e9}) we get the equations of
geometric KdV soliton as follows
\begin{equation}\label{e10}
\left\{
\begin{aligned}
&vv''-v^4k_g^2+{G(f)\over 2}v^4-cv=\om f^2;\\
&3v^2v'k_g+v^3k_g'=-\om f r';\\
&v^2=r'^2+f^2.
\end{aligned}\right.
\end{equation}

\noindent We are interested in the following special cases:\\

i) If the KdV soliton has constant speed, i.e. $v=v_0,$ then (\ref{e10}) becomes
\begin{equation}\label{e14}
\left\{
\begin{aligned}
&-v_0^4k_g^2+{G(f)\over 2}v_0^4-cv_0=\om f^2;\\
&v_0^3k_g'=-\om f r';\\
&r'^2+f^2=v_0^2.
\end{aligned}\right.\nn
\end{equation}

ii) If $\phi_t = id$, i.e. the Killing field $V=0$ and $\om=0$, then the geometric KdV
soliton is just a KdV curve studied in \cite{Song} and
\begin{equation}\nn
\left\{
\begin{aligned}
&vv''-v^4k_g^2+{G(f)\over 2}v^4-cv=0;\\
&3v^2v'k_g+v^3k_g'=0.
\end{aligned}\right.
\end{equation}
Thus $(v^3k_g)'=0$ which implies $v^3k_g=C$ is a constant. Thus we have
$$v^3v''+{G(f)\over2}v^6 -cv^3=C^2.$$
Moreover, in this case, if $u(x)$ has a constant speed, i.e.,
$v=v_0$ is constant, then the geodesic curvature on the KdV curve
must be constant and so is Gauss curvature. In another word, the curve should be a parallel of $\Sigma$ with $r(x) = r_0.$

iii) Inversely if we assume the KdV soliton is a  parallel of $\Sigma$, i.e. $r(x) = r_0$ is a constant, then
both $f=f(r_0)$ and $G = G(f)$ are constants. The equation (\ref{e10}) yields
\begin{equation}\nn
\left\{
\begin{aligned}
&vv''-v^4k_g^2+{G\over 2}v^4-cv=\om f^2;\\
&3v^2v'k_g+v^3k_g'=0;\\
&v^2=f^2.
\end{aligned}\right.
\end{equation}
which implies that $v =f$ is constant. By the same arguments, the quantity $v^3k_g=C$ is constant and so is the geodesic curvature $k_g$. Therefore equation (\ref{e10}) reduces to an equation of $f(r_0)$ and $f^{''}(r_0)$:
$${1\over2}f^{''}f^5+\om f^4 + cf^3 +C^2 = 0.$$
Thus given a surface of revolution and a constant $C$, we can use the above equation to identify a parallel which is a geometric KdV soliton with $v^3k_g=C$.

\subsection{Geometric KdV solitons on cylinders}

In order to see the geometric KdV soliton more precisely, we now show explicit examples
on a cylinder. Suppose $r>0$ is a constant and
$$\Sigma=(r\cos \theta, r\sin \theta, h ), \ \ \theta \in [0, 2\pi], \ h \in \Real$$
is a cylinder of radius $r$.
There are two natural Killing fields on the cylinder generated by the rotation and the translation along the $z_3$-axis respectively.
Namely, a Killing field on $\Sigma$ is the combination of a rotation and a translation and has the form:
$$V = \om R + \sigma T$$
where $R = (-\sin \theta, \cos \theta, 0)^T$, $T = (0, 0 ,1)^T$ and $\omega, \sigma \in \Real$.
 The corresponding one-parameter group of isometries
$\phi_t(z):\mathbb{R}\times\Sigma\rightarrow \Sigma$ is given by
$$\phi_t(z)=\left(
                                  \begin{array}{ccc}
                                    \cos \om t & -\sin \om t & 0 \\
                                    \sin\om t & \cos\om t & 0 \\
                                    0 & 0 & 1 \\
                                  \end{array}
                                \right)\left(
                                         \begin{array}{c}
                                           z_1 \\
                                           z_2 \\
                                           z_3 \\
                                         \end{array}
                                       \right)+ \left(
                                                  \begin{array}{c}
                                                    z_1 \\
                                                    z_2 \\
                                                    z_3+\sigma t \\
                                                  \end{array}
                                                \right)
                                       $$
 Let $u(x)=(r\cos kx, r\sin kx, h(x)), x\in \Real$ be a curve on $\Sigma$, where $k \in \Real$ is some constant. The metric on a cylinder is flat and the curvature vanishes. Thus the equation of geometric KdV soliton on $\Sigma$ is just
$$u_{xxx}=V(u),$$
or more precisely,
$$\left(
    \begin{array}{c}
      rk^3\sin x \\
      -rk^3\cos x \\
      h'''(x) \\
    \end{array}
  \right)=\om\left(
                \begin{array}{c}
                  -\sin x \\
                  \cos x \\
                  0 \\
                \end{array}
              \right)+\sigma\left(
                        \begin{array}{c}
                          0 \\
                          0 \\
                          1\\
                        \end{array}
                      \right).
$$
This implies $\om=rk^3$  and $h'''(x)=\sigma.$ Hence we obtain that
$$h(x)={\sigma\over6}x^3+C_3x^2+C_2x+C_1,$$
where $C_i(i=1,2,3)$ could be any constants.

If $\sigma=0$, then we obtain rotating solitons. In this case, $h(x)$ is a linear combination of
$x^2,$ $x$ and $1$. Obviously, if $h(x)=C_1$, the curves are just
parallels on the cylinder; if $h(x)=C_2x,
(C_2\neq0),$ then we get helixes; while if
$h(x)=C_3x^2,(C_2\neq0)$, the geometric solitons are parabolic
curves rolled on the cylinder. If $\sigma\neq0$, the one-parameter isometric groups
contains a translation along the $z_3$-axis. These KdV solitons are cubic curves on the cylinder.
In fact, the rotating KdV solitons are
combinations of these three kinds of curves. (See Figure 1)

\begin{figure}[!htbp]
  \centering
  \subfigure[$h(x)=C_2x$]{
    \label{fig:subfig:a} 
    \includegraphics[width=5cm,height=4cm]{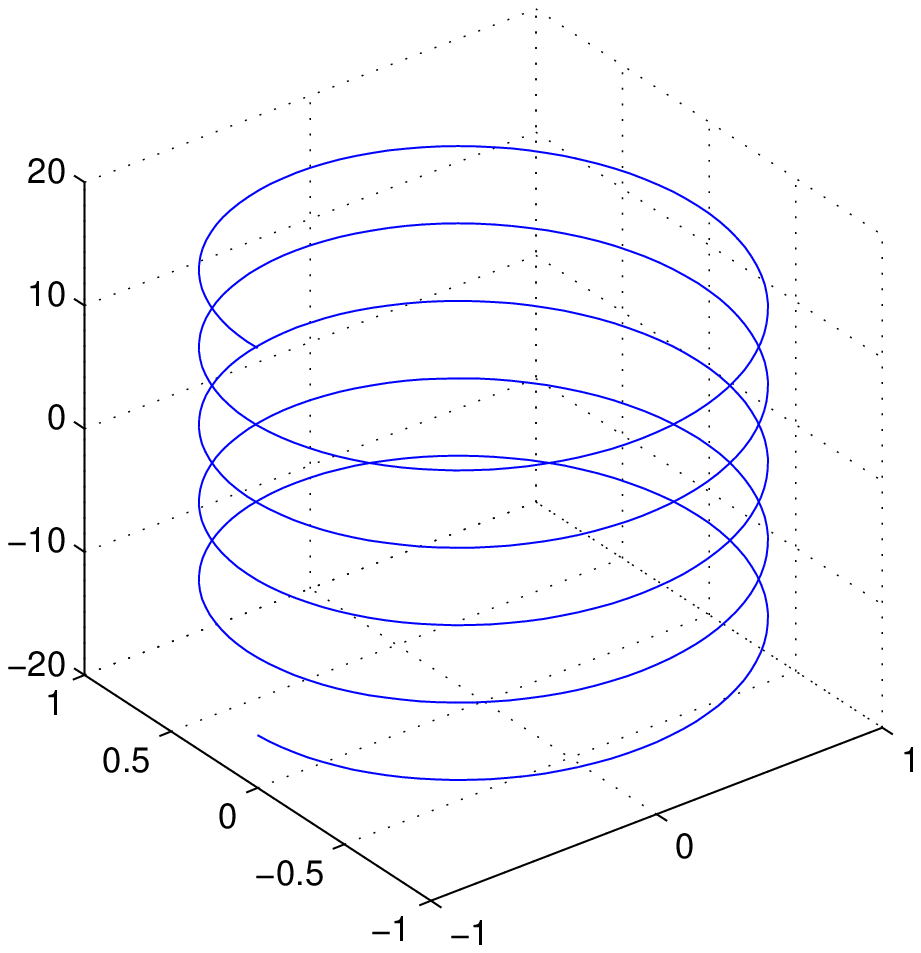}}
  \subfigure[$h(x)=C_3x^2$]{
    \label{fig:subfig:b} 
    \includegraphics[width=5cm,height=4cm]{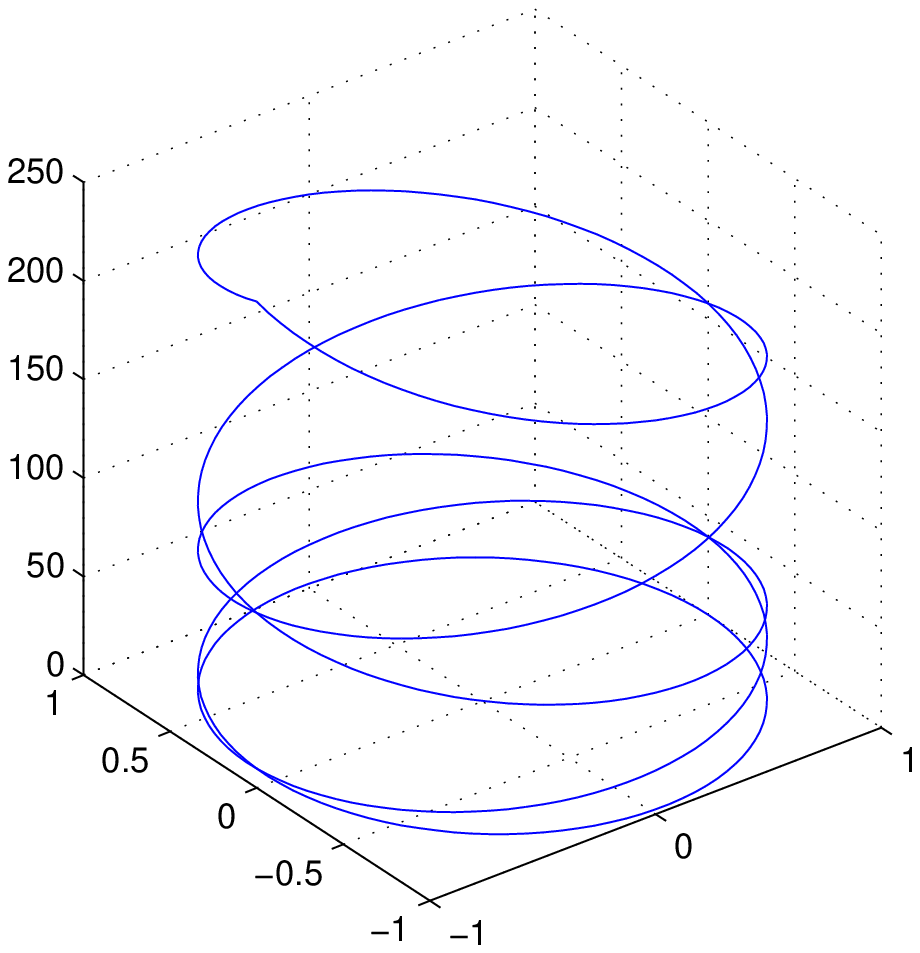}}
  \subfigure[$h(x)=C_4x^3$]{
    \label{fig:subfig:c} 
    \includegraphics[width=5cm,height=4cm]{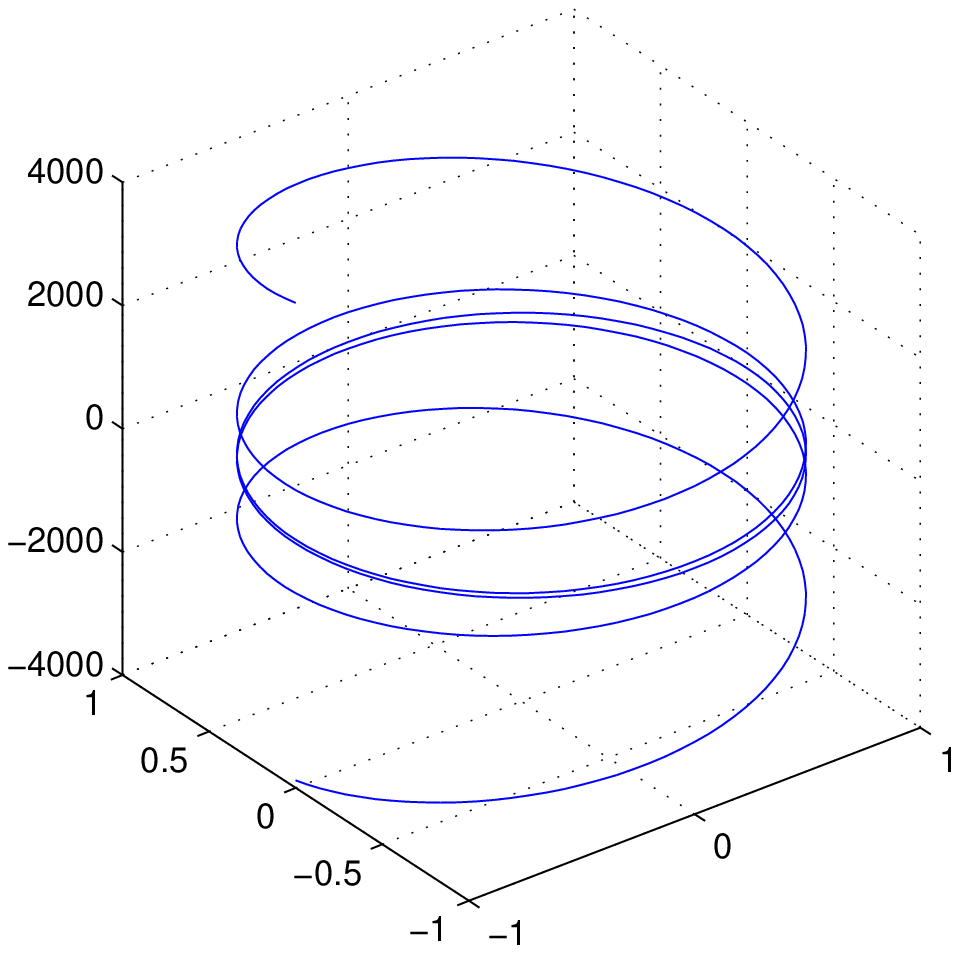}}
      \caption{Geometric KdV Solitons on Cylinder.}
  \label{fig:subfig} 
\end{figure}

\vspace{2cm}

\vspace{1cm}
\noindent{Chong Song}\\
School of Mathematical Sciences, Xiamen University\\
Xiamen 361005, P.R. China.\\
Email: songchong@xmu.edu.cn\\\\
\noindent{Xiaowei Sun}\\
School of Applied Mathematics, Central University of Finance and Ecnomics\\
Beijing 100081, P.R. China.\\
Email: sunxw@cufe.edu.cn\\\\
Youde Wang\\
Academy of Mathematics and Systems Science\\
Chinese Academy of Sciences,\\
Beijing 100190, P.R. China.\\
Email: wyd@math.ac.cn

\end{document}